\begin{document}
\title{Factor equivalence of Galois modules and regulator constants}
\author{Alex Bartel}
\thanks{This research is partly supported by a research fellowship from the
Royal Commission for the exhibition of 1851.}
\date{\today}
\address{Department of Mathematics, Warwick University,
Coventry CV4 7AL, UK}
\email{a.bartel@warwick.ac.uk}

\maketitle
\begin{abstract}
We compare two approaches to the study of Galois module structures: on the one hand
factor equivalence, a technique that has been used by Fr\"ohlich and others to
investigate the Galois module structure of rings of integers of number fields
and of their unit groups, and on the other hand regulator constants, a set of
invariants attached to integral group representations by Dokchitser and Dokchitser,
and used by the author, among others, to study Galois module structures. We show
that the two approaches are in fact closely related, and interpret results
arising from these two approaches in terms of each other. We then use this
comparison to derive a factorisability result on higher
$K$-groups of rings of integers, which is a direct analogue of a theorem of
de Smit on $S$-units.
\end{abstract}

\section{Introduction}
Let $G$ be a finite group. Factor equivalence of finitely generated $\Z$-free
$\Z[G]$-modules is an equivalence relation that is a weakening of local
isomorphism. It has been used e.g. in \cite{Frohl,Solomon,deSmit} among many
other works to derive restrictions on the Galois module structure of rings of
integers of number fields and of their units in terms of
other arithmetic invariants.

More recently, a set of rational numbers has been attached to any finitely
generated $\Z[G]$-module, called regulator constants \cite{squarity}, with the
property that if two modules are locally isomorphic, then they have
the same regulator constants. These invariants have been used in \cite{classnorels}
and in \cite{SB} to investigate the Galois module structure of integral units of
number fields, of higher $K$-groups of rings of integers, and of Mordell-Weil
groups of elliptic curves over number fields.

It is quite natural to ask whether there is a connection between the two approaches
to Galois modules and whether the results of one can be interpreted in terms
of the other. It turns out that there is indeed a strong connection, which we
shall investigate here. We will begin in the next section by recalling the
definitions of factorisability, of factor equivalence, and of regulator
constants. We will then establish some purely algebraic results that link
factor equivalence and regulator constants. In \S\ref{sec:results} we will
revisit the relevant results of \cite{Frohl,deSmit,classnorels,SB} on Galois
module structures and will use the link established in \S\ref{sec:conn} to
compare them to each other. Finally, in \S\ref{sec:K} we will use the results
of \S\ref{sec:conn} to prove a
factorisability result on $K$-groups of rings of integers that is a direct
analogue of \cite[Theorem 5.2]{deSmit}.

Throughout the paper, whenever there will be mention of a group $G$, we will
always assume it to be finite. All $\Z[G]$-modules
will be assumed to be finitely generated and all representations will be
finite-dimensional.

\begin{acknowledgements}
This work is partially funded by a research fellowship from the Royal
Commission for the Exhibition of 1851. Part of this work was done, while
I was a member of the Mathematics Department at Postech, Korea. It is
a great pleasure to thank both institutions for financial support, and
to thank Postech for a friendly and supportive working environment. I am also
grateful to an anonymous referee for carefully reading the manuscript and for
helpful suggestions.
\end{acknowledgements}

\section{Factorisability and regulator constants}\label{sec:conn}
\subsection{Factorisability and factor equivalence}
We will begin by recalling the definition of factorisability and of factor
equivalence, and by discussing slight reformulations.
This concept first appears in \cite{Nelson} and plays a prominent r\^ole e.g. in
the works of Fr\"ohlich.

\begin{definition}\label{def:factorisability}
Let $G$ be a group (always assumed to be finite), and let $X$ be an abelian
group, written multiplicatively.
A function $f: H\mapsto x\in X$ on the set of subgroups $H$ of $G$ with values in
$X$ is \emph{factorisable} if there exists an injection of abelian groups
$\iota: X\hookrightarrow Y$ and a function $g: \chi\mapsto y\in Y$ on the
irreducible characters of $G$ with values in $Y$, with the property that
\[
\iota(f(H)) = \prod_{\chi\in\Irr(G)} g(\chi)^{\langle\chi,\C[G/H]\rangle}
\]
for all $H\leq G$, where $\Irr(G)$ denotes the set of irreducible characters of
$G$, and $\langle\cdot,\cdot\rangle$ denotes the usual inner product of characters.
\end{definition}

The definition one often sees in connection with Galois module structures is
a special case of this: $X$ is usually taken to be the multiplicative group of
fractional ideals of the ring of integers $\cO_k$ of some number field $k$,
and $Y$ is required to be the ideal group of $\cO_K$ for some finite Galois
extension $K/k$ with Galois group $G$, with $\iota$ being the natural map
$I\mapsto I\cO_K$. 

Let us introduce convenient representation theoretic language to concisely 
rephrase the above definition.
\begin{definition}
The \emph{Burnside ring} $B(G)$ of a group $G$ is the free abelian group
on isomorphism classes $[S]$ of finite $G$-sets, modulo the subgroup generated
by elements of the form
\[
[S]+[T] - [S\sqcup T],
\]
and with multiplication defined by
\[
[S]\cdot [T] = [S\times T].
\]
\end{definition}

\begin{definition}
The \emph{representation ring} $R_{\cK}(G)$ of a group $G$ over the field
$\cK$ is the free abelian group on isomorphism classes $[\rho]$ of finite dimensional
$\cK$-representations of $G$, modulo the subgroup generated by elements of the form
\[
[\rho]+[\tau] - [\rho\oplus \tau],
\]
and with multiplication defined by
\[
[\rho]\cdot [\tau] = [\rho\otimes \tau].
\]
\end{definition}

In the case that $\cK=\Q$, which will be the main case of interest, we will
omit the subscript and simply refer to the representation ring $R(G)$ of $G$.

There is a natural map $B(G)\rightarrow R(G)$ that sends a $G$-set $X$ to the
permutation representation $\Q[X]$. Denote its kernel by $K(G)$. By
Artin's induction theorem, this map always has a finite cokernel $C(G)$
of exponent dividing $|G|$. Moreover, $C(G)$ is known to be trivial in many
special cases, e.g. if $G$ is nilpotent, or a symmetric group. The cokernel $C(G)$
is important when strengthenings of the notion of factorisability are considered,
such as $F$-factorisability, but will not be important for us.

It follows immediately from Definition \ref{def:factorisability} and from
standard representation theory that for $f$ to be factorisable, it has to
be constant on conjugacy classes of subgroups. There is a bijection between
conjugacy classes of subgroups of $G$ and isomorphism classes of transitive
$G$-sets, which assigns to $H\leq G$ the set of cosets $G/H$ with left $G$-action
by multiplication, and to a $G$-set $S$ the conjugacy class of any point stabiliser
$\Stab_G(s)$, $s\in S$. An arbitrary $G$-set is a disjoint union
of transitive $G$-sets, and so an element of $B(G)$ can be identified with
a formal $\Z$-linear combination of conjugacy classes of subgroups of $G$. So if
$f$ is a factorisable function, then it can be thought of as a function on
conjugacy classes of subgroups of $G$, equivalently on transitive $G$-sets, and
then extended linearly to yield a group homomorphism $B(G)\rightarrow X$.

%It therefore follows that the following is equivalent
%to Definition \ref{def:factorisability}:
\begin{proposition}\label{prop:factorisability}
Let $f:B(G)\rightarrow X$ be a group homomorphism, where $X$ is an abelian
group. The following are equivalent:
\begin{enumerate}
\item $f$ is factorisable in the sense of Definition \ref{def:factorisability}.
\item There exists an injection $\iota:X\hookrightarrow Y$
  of abelian groups such that the composition $\iota\circ f$ factors through
  the natural map $B(G)\rightarrow R_{\C}(G)$.
\item There exists
  an injection $\iota':X\hookrightarrow Y'$ such that $\iota'\circ f$ factors
  through the natural map $B(G)\rightarrow R(G)$, i.e. there is a homomorphism
  $g':R(G)\rightarrow Y'$ that makes the following diagram (whose first row is
  exact) commute:
  \[
  \xymatrix{
  0\ar[r] & K(G) \ar[r] & B(G) \ar[r] \ar[d]_f & R(G)\ar[d]^{g'} \ar[r] & C(G)\ar[r] & 0\\
  & & X \ar@{^{(}->}[r]_{\iota'} & Y'.& &
  }
  \]
\item The homomorphism $f$ is trivial on $K(G)=\ker(B(G)\rightarrow R(G))$.
\end{enumerate}
\end{proposition}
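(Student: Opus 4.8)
The plan is to prove the cycle of implications $(1)\Rightarrow(2)\Rightarrow(3)\Rightarrow(4)\Rightarrow(1)$. Two standard facts about representation rings will do most of the work. First, $R_{\C}(G)$ is the free abelian group on $\Irr(G)$; hence a homomorphism out of $R_{\C}(G)$ is the same datum as an arbitrary function on $\Irr(G)$, and the image of $[G/H]$ under the natural map $B(G)\to R_{\C}(G)$ is $\sum_{\chi\in\Irr(G)}\langle\chi,\C[G/H]\rangle\,\chi$. Second, this natural map factors as $B(G)\xrightarrow{\,\nu\,}R(G)\xrightarrow{\,j\,}R_{\C}(G)$, where $j$ is induced by extension of scalars $\rho\mapsto\rho\otimes_{\Q}\C$, and $j$ is injective because a rational representation is determined up to isomorphism by its (rational-valued) character, which is unchanged by $-\otimes_{\Q}\C$. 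Consequently $\ker\bigl(B(G)\to R_{\C}(G)\bigr)=\ker\nu=K(G)$. I write all the abelian groups in sight multiplicatively, in keeping with Definition~\ref{def:factorisability}.

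The first three implications are then just definition-unwinding. For $(1)\Rightarrow(2)$: extend the function $g$ on $\Irr(G)$ furnished by Definition~\ref{def:factorisability} to the (unique) homomorphism $\tilde g\colon R_{\C}(G)\to Y$; by the displayed formula in that definition, $\tilde g\circ\bigl(B(G)\to R_{\C}(G)\bigr)$ and $\iota\circ f$ are homomorphisms $B(G)\to Y$ agreeing on each generator $[G/H]$, hence agreeing, which is exactly $(2)$. For $(2)\Rightarrow(3)$: if $\iota\colon X\hookrightarrow Y$ and $g\colon R_{\C}(G)\to Y$ witness $(2)$, then $g\circ j\colon R(G)\to Y$ satisfies $(g\circ j)\circ\nu=g\circ\bigl(B(G)\to R_{\C}(G)\bigr)=\iota\circ f$, so $(3)$ holds with the same $\iota$ and the same $Y$. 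For $(3)\Rightarrow(4)$: for $x\in K(G)=\ker\nu$ we get $\iota'(f(x))=g'(\nu(x))=g'(0)=0$, and since $\iota'$ is injective, $f(x)=0$.

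The only implication with any content is $(4)\Rightarrow(1)$, and even it is soft; this is the step I would expect to require a genuine move, namely enlarging the target of $f$. Assume $(4)$. Since $\ker\bigl(B(G)\to R_{\C}(G)\bigr)=K(G)$, the homomorphism $f$ factors as $f=\bar f\circ\mu$, where $\mu\colon B(G)\to R_{\C}(G)$ is the natural map and $\bar f$ is a homomorphism from the subgroup $\mu(B(G))\leq R_{\C}(G)$ to $X$. Embed $X$ into a divisible abelian group $Y$ (every abelian group admits such an embedding). Divisible abelian groups are injective $\Z$-modules, so the map $\iota\circ\bar f\colon\mu(B(G))\to Y$ extends, along the inclusion $\mu(B(G))\hookrightarrow R_{\C}(G)$, to a homomorphism $\tilde g\colon R_{\C}(G)\to Y$. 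Setting $g(\chi):=\tilde g(\chi)$ for $\chi\in\Irr(G)$ and expanding $\mu([G/H])=\C[G/H]$ in the basis $\Irr(G)$ yields $\iota(f(H))=\tilde g(\C[G/H])=\prod_{\chi\in\Irr(G)}g(\chi)^{\langle\chi,\C[G/H]\rangle}$, so $f$ is factorisable. I anticipate no real obstacle: the point is simply that passing from $X$ to an injective overgroup---precisely the latitude built into Definition~\ref{def:factorisability}---lets the factorisation of $f$ through the subgroup $\mu(B(G))$ be spread out over all of $R_{\C}(G)$, which is what makes the a priori rigid condition $(1)$ equivalent to the transparent condition $(4)$.
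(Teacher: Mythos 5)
Your proposal is correct and uses essentially the same approach as the paper: the non-trivial step (showing that vanishing on $K(G)$ suffices for factorisability) is handled, as in the paper, by embedding $X$ into a divisible (hence injective) abelian group and extending the induced map on the image of $B(G)$ inside $R_\C(G)$. The cosmetic differences — organising the argument as a cycle $(1)\Rightarrow(2)\Rightarrow(3)\Rightarrow(4)\Rightarrow(1)$ rather than the paper's grouping, proving $(2)\Rightarrow(3)$ by composing with the injection $R(G)\hookrightarrow R_\C(G)$ rather than shrinking $Y$ to a subgroup $Y'$, and making explicit the freeness of $R_\C(G)$ on $\Irr(G)$ — do not change the substance of the argument.
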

\begin{proof}
The condition (2) is just a reformulation of (1).

Suppose that condition (2) is satisfied, and let us deduce (3).
Let $g$ be the map $R_{\C}(G)\rightarrow Y$ whose existence is postulated by (2).
Define $Y'$ to be the subgroup of $Y$
generated by $\iota(X)$ and by $g(R(G))$, define $g'$ to be the restriction
of $g$ to $R(G)$, followed by the inclusion $g(R(G))\hookrightarrow Y'$,
and $\iota'$ to be $\iota$, followed by the inclusion $\iota(X)\hookrightarrow Y'$.
Then $Y'$, $\iota'$, $g'$ satisfy (3).

A brief diagram chase shows that (3) implies (4): since $\iota'$, is
an injection, $\ker(\iota'\circ f) = \ker f$. So for the diagram in (3) to
commute, we must have $\ker f \geq \ker(B(G)\rightarrow R(G))=K(G)$. 
Incidentally, exactly the same proof shows also that (2) implies (4).

Finally, the implication (4) $\Rightarrow $(2),(3) follows from two standard facts
about abelian groups:
\begin{itemize}
\item any abelian group can be embedded into a divisible
abelian group,
\item and any homomorphism from a subgroup $A$ of an abelian group
$B$ to a divisible group $D$ extends to a homomorphism from $B$ to $D$.
\end{itemize}
Since $f$ is trivial on $K(G)$, it induces a homomorphism from $B(G)/K(G)$,
which is canonically identified with a subgroup of $R(G)\leq R_\C(G)$. Now,
embed $X$ into a divisible group $Y$, and extend $f:B(G)/K(G)\rightarrow Y$
to a homomorphism $R(G) \hookrightarrow R_\C(G)\rightarrow Y$.
\end{proof}
%\begin{definition}

%  A group homomorphism $f:B(G)\rightarrow X$ to an abelian group $X$ is
%  \emph{factorisable} if there exists an injection $\iota:X\hookrightarrow Y$
%  of abelian groups such that the composition $\iota\circ f$ factors through
%  the natural map $B(G)\rightarrow R_{\C}(G)$; equivalently, if there exists
%  an injection $\iota':X\hookrightarrow Y'$ such that $\iota'\circ f$ factors
%  through the natural map $B(G)\rightarrow R(G)$, i.e. if there is a homomorphism
%  $g':R(G)\rightarrow Y'$ that makes the following diagram (whose first row is
%  exact) commute:
%  \[
%  \xymatrix{
%  0\ar[r] & K(G) \ar[r] & B(G) \ar[r] \ar[d]_f & R(G)\ar[d]^{g'} \ar[r] & C(G)\ar[r] & 0\\
%  & & X \ar@{^{(}->}[r]_{\iota'} & Y'.& &
%  }
%  \]
%  Again equivalently, a group homomorphism $f:B(G)\rightarrow X$ is factorisable
%  if it is trivial on $K(G)$.
%  
%  In the special case that $X$ is the multiplicative group of non-zero rational
%  numbers, we say that $f$ is \emph{factorisable at $p$} for a prime $p$ if
%  the $p$-primary part of $f$ is factorisable.
%\end{definition}

\begin{remark}
\begin{enumerate}
\item
It follows from the last part of the proof that if $X$ is divisible, then $Y'$
can be taken to be equal to $X$ in Proposition \ref{prop:factorisability}.
Also, if $C(G)$ is trivial, then $B(G)/K(G)\cong R(G)$, and again $Y'$ can
be taken to be equal to $X$.
\item If $X$ is the group of fractional ideals
of a number field $k$, and if $f$ vanishes on $K(G)$, then $Y'$ can always be
taken to be the group of fractional ideals of a suitable Galois extension $K/k$,
so this is not an additional restriction. Indeed, a sufficient condition on
$Y'$ is that elements of $B(G)/K(G)$ that are $n$-divisible in $R(G)$ are mapped
under $f$ to elements of $X$ that become $n$-divisible in $Y'$. So if
$X$ is the group of fractional ideals of a number field $k$, this condition
translates into relative ramification indices of some integral
ideals of $K$ being divisible by some integers, and some elements of $k$ having
certain $n$-th roots in $K$.
\end{enumerate}
\end{remark}

\begin{remark}
  In \cite{tamroot}, the word ``representation-theoretic'' has been used in place
  of ``factorisable''.
\end{remark}

\begin{definition}
  Let $G$ be a group, and let $M$, $N$ be two
  $\Z$-free $\Z[G]$-modules such that there is an isomorphism
  of $\Q[G]$-modules $M\otimes \Q\cong N\otimes \Q$. Fix an
  embedding $i: M\rightarrow N$ of $G$-modules with finite cokernel. Then $M$
  and $N$ are said to be \emph{factor equivalent},
  written $M \land N$, if the function $H\mapsto [N^H : i(M^H)]$ is factorisable.
\end{definition}
The notion of factor equivalence is independent of the choice of the embedding
$i$, and defines an equivalence relation on the set of $\Z$-free
$\Z[G]$-modules. If $M\otimes \Z_p\cong N\otimes \Z_p$ for some prime $p$,
then $i$ can be chosen to have a cokernel of order coprime to $p$.
Indeed, $M\otimes \Z_p\cong N\otimes \Z_p$ if and only if
$M\otimes \Z_{(p)}\cong N\otimes \Z_{(p)}$ (\cite{Maranda}, see also
\cite{Reiner}),
and an isomorphism $M\otimes \Z_{(p)}\rightarrow N\otimes \Z_{(p)}$ gives rise
to an embedding $i$ with cokernel of order coprime to $p$, by composing it with
multiplication by an integer to clear denominators.
It follows that two modules that are locally isomorphic at all primes $p$ are
factor equivalent.

The above definition is the one usually appearing in the literature,
but it will be convenient for us to follow \cite{deSmit} in defining factor
equivalence for $\Z[G]$-modules that are not necessarily $\Z$-free:
\begin{definition}
  Let $G$ be a group, and let $M$, $N$ be two
  $\Z[G]$-modules such that there is an isomorphism of $\Q[G]$-modules
  $M\otimes\Q \cong N\otimes \Q$. Fix a map
  $i:M\rightarrow N$ of $G$-modules with finite kernel and cokernel. Then
  $M$ and $N$ are said to be \emph{factor equivalent} if the function
  $H\mapsto [N^H:i(M^H)]\cdot|\ker(i)^H|^{-1}$ is factorisable.
\end{definition}
Again, this notion is independent of the choice of the map $i$, and defines an
equivalence relation on the set of $\Z[G]$-modules that
weakens the relation of lying in the same genus (where $M$ and $N$ are said to
lie in the same genus if $M\otimes \Z_p\cong N\otimes \Z_p$ for all primes $p$).

\subsection{Regulator constants}
We continue to denote by $G$ an arbitrary (finite) group. We also continue to
use the identification between conjugacy classes of subgroups of $G$ and
isomorphism classes of transitive $G$-sets. Under this identification, a general
element of $B(G)$ will be written as $\Theta=\sum_{H\leq G} n_H H$ with the sum
running over mutually non-conjugate subgroups, and with $n_H\in \Z$. An element
of $K(G)$ is such a linear combination with the property that the virtual
permutation representation $\bigoplus_H \Q[G/H]^{\oplus n_H}$ is 0. Alternatively, 
more down to earth, if we write $\Theta$ as $\Theta = \sum_i n_iH_i -
\sum_j n_j'H_j'$ with all $n_i$, $n_j'$ non-negative, then $\Theta$ is in $K(G)$
if and only if the permutation representations $\bigoplus_i \Q[G/H_i]^{\oplus n_i}$
and $\bigoplus_j \Q[G/H_j']^{\oplus n_j'}$ are isomorphic.

\begin{definition}
An element $\Theta = \sum_H n_H H$ of $K(G)$ is called a \emph{Brauer relation}.
\end{definition}

The following invariants of $\Z[G]$-modules were introduced
in \cite{squarity} and used e.g. in \cite{classnorels,SB} to investigate
Galois module structures, as we shall review in the next section:

\begin{definition}
Let $G$ be a group and $M$ a $\Z[G]$--module.
Let $\langle\cdot,\cdot\rangle:M\times M\rightarrow \C$ be a bilinear $G$--invariant
pairing that is non--degenerate on $M/\tors$. Let $\Theta=\sum_{H\leq G} n_H H\in K(G)$
be a Brauer relation.
The regulator constant of $M$ with respect to $\Theta$ is defined by
\[
\cC_{\Theta}(M) = \prod_{H\leq G}\det\left(\frac{1}{|H|}\langle\cdot,\cdot\rangle\big|M^H/\tors\right)\in \C^{\times}.
\]
Here and elsewhere, the abbreviation $\tors$ refers to the $\Z$-torsion subgroup.
\end{definition}
This is independent of the choice of pairing \cite[Theorem 2.17]{tamroot}. As
a consequence, $\cC_{\Theta}(M)$ is always a rational number, since the pairing
can always be chosen to be $\Q$-valued. It is also immediate that
$\cC_{\Theta_1+\Theta_2}(M) = \cC_{\Theta_1}(M)\cC_{\Theta_2}(M)$, so given
a $\Z[G]$-module, it suffices to compute the regulator constants with respect
to a basis of $K(G)$. In other words, this
construction assigns to each $\Z[G]$-module essentially a finite set of rational
numbers, one for each element of a fixed basis of $K(G)$.

One can show that if $M$, $N$ are two $\Z[G]$-modules such that
$M\otimes\Z_p \cong N\otimes \Z_p$, then for all $\Theta\in K(G)$ the $p$-parts of
$\cC_\Theta(M)$ and $\cC_\Theta(N)$ are the same. So, like factor equivalence,
regulator constants provide invariants of a $\Z[G]$-module that, taken
together, are coarser than the genus.

\subsection{The connection between factor equivalence and regulator constants}

Let $M$, $N$ be two $\Z[G]$-modules with the property that $M\otimes \Q\cong N\otimes \Q$,
let $i:M\rightarrow N$ be a map of $G$-modules with finite kernel and
cokernel. Fix a $\C$-valued bilinear pairing $\langle\cdot,\cdot\rangle$
on $N$ that is non-degenerate on $N/\tors$. The following immediate observation
is crucial for linking regulator constants with the notion of factorisability:
\begin{eqnarray*}
\det\left(\langle\cdot,\cdot\rangle\big|i(M)/\tors\right) & = &
[N/\tors: i(M)/\tors]^2\cdot
\det\left(\langle\cdot,\cdot\rangle\big|N/\tors\right)\\
& = &
\frac{[N:i(M)]^2}{|\ker i|^2}\cdot\frac{|M_{\tors}|^2}{|N_{\tors}|^2}\cdot
\det\left(\langle\cdot,\cdot\rangle\big|N/\tors\right).
\end{eqnarray*}

We deduce
\begin{lemma}\label{lem}
Let $M$, $N$ be two $\Z[G]$-modules such that $M\otimes \Q\cong N\otimes \Q$,
let $\Theta = \sum_H n_H H$ be a Brauer relation. Then
\[
\cC_\Theta(M) =
\prod_H\left(\frac{[N^H:i(M^H)]}{|\ker(i|_M^H)|}\cdot\frac{|M_{\tors}^H|}{|N_{\tors}^H|}\right)^{2n_H}\cdot\cC_\Theta(N)
\]
for any map $i:M\rightarrow N$ of $G$-modules with finite kernel and cokernel.
\end{lemma}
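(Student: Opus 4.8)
The plan is to derive the stated formula directly from the ``immediate observation'' displayed just before the lemma, applied one subgroup at a time. For a fixed subgroup $H\leq G$, I would like to apply that identity not to the pair $(M,N)$ but to the pair $(M^H,N^H)$, together with the restricted pairing $\langle\cdot,\cdot\rangle|_{N^H}$ and the map $i|_{M^H}\colon M^H\to N^H$. The first thing to check is that this is legitimate: the pairing is $G$-invariant, hence restricts to a bilinear pairing on $N^H$ which is still non-degenerate on $N^H/\tors$ (non-degeneracy on the fixed subspace follows because $\langle\cdot,\cdot\rangle$ is non-degenerate on $N\otimes\Q$ and $\tfrac1{|H|}\sum_{h\in H}h$ is a $G$-equivariant projector onto $(N\otimes\Q)^H$, so the pairing is non-degenerate there too). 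Likewise $i|_{M^H}$ has finite kernel and cokernel: its kernel is $(\ker i)^H$, which is finite, and its cokernel injects into $(N/i(M))^H$ up to the finite group $H^1(H,i(M))$, hence is finite as well. Also $M^H\otimes\Q\cong N^H\otimes\Q$, since taking $H$-invariants is exact over $\Q$ and $M\otimes\Q\cong N\otimes\Q$ as $\Q[G]$-modules.

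Granting that, the displayed observation applied to $(M^H,N^H)$ reads
\[
\det\left(\langle\cdot,\cdot\rangle\big|i(M^H)/\tors\right)
=\frac{[N^H:i(M^H)]^2}{|(\ker i)^H|^2}\cdot\frac{|M^H_{\tors}|^2}{|N^H_{\tors}|^2}\cdot
\det\left(\langle\cdot,\cdot\rangle\big|N^H/\tors\right).
\]
Here I am using $\ker(i|_{M^H})=(\ker i)^H$, which the lemma writes as $\ker(i|_M^H)$. Dividing both sides of the regulator-constant defining product by $|H|^{\dim}$ in the appropriate way, i.e.\ observing that $\det\bigl(\tfrac1{|H|}\langle\cdot,\cdot\rangle\big|i(M^H)/\tors\bigr)$ and $\det\bigl(\tfrac1{|H|}\langle\cdot,\cdot\rangle\big|M^H/\tors\bigr)$ differ only by the same power of $|H|$ that relates the un-normalised determinants --- and since $i|_{M^H}$ becomes an isomorphism after $\otimes\Q$, the lattice $i(M^H)/\tors$ has the same rank as $M^H/\tors$, so $\det\bigl(\tfrac1{|H|}\langle\cdot,\cdot\rangle\big|M^H/\tors\bigr)=\det\bigl(\tfrac1{|H|}\langle\cdot,\cdot\rangle\big|i(M^H)/\tors\bigr)$ because the induced pairing on $M^H/\tors$ is by definition pulled back along the rank-preserving map $i$. (More precisely, the regulator constant of $M$ is computed with \emph{any} non-degenerate invariant pairing, so one may compute $\cC_\Theta(M)$ using the pairing pulled back from $N$ via $i$; then $\det\bigl(\tfrac1{|H|}\langle\cdot,\cdot\rangle|_{M^H/\tors}\bigr)$ equals $\det\bigl(\tfrac1{|H|}\langle\cdot,\cdot\rangle|_{i(M^H)/\tors}\bigr)$ on the nose.)

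Putting these together, for each $H$ one gets
\[
\det\left(\tfrac1{|H|}\langle\cdot,\cdot\rangle\big|M^H/\tors\right)
=\left(\frac{[N^H:i(M^H)]}{|(\ker i)^H|}\cdot\frac{|M^H_{\tors}|}{|N^H_{\tors}|}\right)^{2}
\det\left(\tfrac1{|H|}\langle\cdot,\cdot\rangle\big|N^H/\tors\right),
\]
and raising to the $n_H$ power and multiplying over all $H$ (with multiplicities $n_H$ from the Brauer relation $\Theta=\sum_H n_H H$) yields exactly
\[
\cC_\Theta(M)=\prod_H\left(\frac{[N^H:i(M^H)]}{|\ker(i|_M^H)|}\cdot\frac{|M_{\tors}^H|}{|N_{\tors}^H|}\right)^{2n_H}\cdot\cC_\Theta(N),
\]
as claimed. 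Note that independence of $\cC_\Theta$ from the choice of pairing is what makes the argument clean --- we may as well use the pulled-back pairing throughout --- and it is also what guarantees the right-hand side does not depend on the choice of $i$ (the left side manifestly does not).

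The one genuinely substantive point, and the step I expect to be the main obstacle to write cleanly, is the identification $\det\bigl(\tfrac1{|H|}\langle\cdot,\cdot\rangle|_{i(M^H)/\tors}\bigr)=\det\bigl(\tfrac1{|H|}\langle\cdot,\cdot\rangle|_{M^H/\tors}\bigr)$ --- that is, that passing from $M^H$ to its isomorphic image $i(M^H)$ inside $N^H$ does not change the normalised determinant. This is really just the statement that the defining expression for a regulator constant only sees the lattice $M^H$ up to the $G$-isomorphism class together with the choice of (equivalence class of) pairing, combined with the fact that $i$ induces an isomorphism $M^H/\tors\xrightarrow{\sim}i(M^H)/\tors$ of lattices carrying the pulled-back pairing to the restricted pairing by construction. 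Everything else --- exactness of $H$-invariants over $\Q$, finiteness of $(\ker i)^H$ and of the cokernel of $i|_{M^H}$, multiplicativity of $\cC$ in $\Theta$ --- is routine and can be invoked without elaboration.
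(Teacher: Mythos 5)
Your proof is correct and fills in the details of the paper's terse ``We deduce'': one applies the displayed observation to each pair $(M^H,N^H)$ with $i|_{M^H}$ and the pulled-back pairing, uses independence of $\cC_\Theta$ from the choice of pairing, and multiplies over $H$ with the exponents $n_H$ from the Brauer relation. One small slip: to bound the cokernel of $i|_{M^H}$ the relevant cohomology group is $H^1(H,\ker i)$ (controlling $[i(M)^H:i(M^H)]$), not $H^1(H,i(M))$, though finiteness of that cokernel is in any case immediate from the fact that $M^H$, $i(M^H)$ and $N^H$ all have the same rank since $M^H\otimes\Q\cong N^H\otimes\Q$ and $(\ker i)^H$ is finite.
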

By combining this with Proposition \ref{prop:factorisability}, we obtain
\begin{corollary}\label{cor}
Two $\Z[G]$-modules $M$ and $N$ with the property that $M\otimes \Q\cong \N\otimes \Q$
are factor equivalent if and only if
\[
\cC_{\Theta}(M)/\cC_\Theta(N) = \prod_H\left(\frac{|M_{\tors}^H|}{|N_{\tors}^H|}\right)^{2n_H}
\]
for all Brauer relations $\Theta = \sum_H n_HH$. In particular, if $M$ and $N$ are
$\Z$-free and satisfy $M\otimes \Q\cong \N\otimes \Q$, then they are factor
equivalent if and only if $\cC_{\Theta}(M) = \cC_\Theta(N)$ for all $\Theta\in K(G)$.
\end{corollary}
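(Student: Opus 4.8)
The plan is to unwind the definition of factor equivalence, apply the equivalence of conditions (1) and (4) of Proposition \ref{prop:factorisability}, and then match the resulting condition with the identity of Lemma \ref{lem}; the only non-formal ingredient will be a small positivity observation.

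First I would record that, by definition, $M$ and $N$ are factor equivalent precisely when the function
\[
f\colon H\longmapsto [N^H:i(M^H)]\cdot|\ker(i|_{M^H})|^{-1}
\]
is factorisable, where $i\colon M\to N$ is any fixed $G$-module map with finite kernel and cokernel, and where $\ker(i|_{M^H})=(\ker i)^H$ since $\ker i$ is a $G$-submodule of $M$. Every value $f(H)$ is a quotient of positive integers, so $f$ takes values in the multiplicatively written group $X=\Q_{>0}^{\times}$ of positive rationals, and $f$ is constant on conjugacy classes of subgroups, since conjugation by $g\in G$ is an automorphism of $N$ carrying $M^H$ and $\ker(i|_{M^H})$ to $M^{gHg^{-1}}$ and $\ker(i|_{M^{gHg^{-1}}})$. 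Thus $f$ extends to a homomorphism $B(G)\to X$, and Proposition \ref{prop:factorisability} shows that $f$ is factorisable if and only if it vanishes on $K(G)$, that is, if and only if $\prod_H f(H)^{n_H}=1$ for every Brauer relation $\Theta=\sum_H n_H H$.

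It then remains to reinterpret this condition via Lemma \ref{lem}. Rearranging the formula there, for every Brauer relation $\Theta=\sum_H n_H H$ one obtains
\[
\frac{\cC_\Theta(M)}{\cC_\Theta(N)}=\Bigl(\prod_H f(H)^{n_H}\Bigr)^{2}\cdot\prod_H\Bigl(\frac{|M^H_{\tors}|}{|N^H_{\tors}|}\Bigr)^{2n_H}.
\]
If $\prod_H f(H)^{n_H}=1$ for all $\Theta$, the asserted formula for $\cC_\Theta(M)/\cC_\Theta(N)$ drops out immediately. Conversely, if that formula holds for all $\Theta$, the displayed identity forces $\bigl(\prod_H f(H)^{n_H}\bigr)^2=1$, hence $\prod_H f(H)^{n_H}=\pm1$, and I would then invoke the positivity of $f$ to conclude that this product is $+1$. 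Chaining these equivalences yields the first assertion, and the ``in particular'' clause is the special case $M_{\tors}=N_{\tors}=0$: the torsion factors are then all trivial, the general definition of factor equivalence reduces to the one for $\Z$-free modules (with $i$ injective), and the condition becomes $\cC_\Theta(M)=\cC_\Theta(N)$ for all $\Theta\in K(G)$. The proof has no genuinely hard step, since Proposition \ref{prop:factorisability} and Lemma \ref{lem} do the real work; the main thing to get right is the bookkeeping, namely that the function $f$ occurring in the definition of factor equivalence is exactly the quantity appearing, squared, in Lemma \ref{lem}, together with the harmless positivity remark needed in the converse direction.
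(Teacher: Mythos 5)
Your proof is correct and follows exactly the route the paper intends: the paper simply states ``By combining this with Proposition~\ref{prop:factorisability}, we obtain'' the corollary, and your write-up fills in precisely that combination, using the equivalence (1)$\Leftrightarrow$(4) of the proposition together with the identity of Lemma~\ref{lem}. The positivity remark, needed to pass from $\bigl(\prod_H f(H)^{n_H}\bigr)^2=1$ to $\prod_H f(H)^{n_H}=1$, is a detail the paper leaves implicit but which you correctly identify and justify.
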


\section{Galois module structure}\label{sec:results}
We shall now show by way of several examples how Lemma \ref{lem} and Corollary
\ref{cor} link known results on Galois module structures with each other.

Throughout this section, let $K/k$ be a finite Galois extension of number fields
with Galois group $G$. The ring of integers $\cO_K$, and its unit group $\cO_K^\times$
are both $\Z[G]$-modules. More generally, if $S$ is any $G$-stable set of places
of $K$ that contains the Archimedean places,
then the group of $S$-units $\cO_{K,S}^\times$ of $K$ is a $\Z[G]$-module.
It is a long standing and fascinating problem to determine the $G$-module
structure of these groups, e.g. by comparing it to other well-known $G$-modules
or by linking it to other arithmetic invariants.

A starting point is the observation that $\cO_K\otimes \Q\cong \Q[G]^{\oplus [k:\Q]}$
as $\Q[G]$-modules. Also, by Dirichlet's unit theorem,
$\cO_{K,S}^\times\otimes \Q\cong I_{K,S}\otimes \Q$, where
\[
I_{K,S} = \ker\left(\Z[S]\rightarrow \Z\right),
\]
with the map being the augmentation map that sends each $v\in S$ to 1.
It is therefore natural to compare the Galois module $\cO_K$ to $\Z[G]^{\oplus [k:\Q]}$
and $\cO_{K,S}^\times$ to $I_{K,S}$.

\subsection{Additive Galois module structure}
It had been known since E. Noether that $\cO_K$ lies in the same genus as
$\Z[G]^{\oplus [k:\Q]}$ if and only if $K/k$ is at most tamely ramified. The
following is therefore particularly interesting in the wildly ramified case:

\begin{theorem}[\cite{deSmit}, Theorem 3.2, see also \cite{Frohl}, Theorem 7 (Additive)]
We always have that $\cO_K$ is factor equivalent to $\Z[G]^{\oplus[k:\Q]}$.
\end{theorem}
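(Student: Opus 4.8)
The plan is to use Corollary \ref{cor}. Since both $\cO_K$ and $\Z[G]^{\oplus[k:\Q]}$ are $\Z$-free (the torsion of $\cO_K$ as an additive group is trivial), and since we have already observed the isomorphism of $\Q[G]$-modules $\cO_K\otimes\Q\cong\Q[G]^{\oplus[k:\Q]}$, Corollary \ref{cor} tells us that factor equivalence is equivalent to the assertion $\cC_\Theta(\cO_K)=\cC_\Theta(\Z[G]^{\oplus[k:\Q]})$ for every Brauer relation $\Theta\in K(G)$. So the whole problem reduces to a computation of regulator constants.

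First I would compute $\cC_\Theta(\Z[G])$. The point is that the regular representation is ``self-dual'' in a strong sense: one can equip $\Z[G]$ with the standard $G$-invariant pairing for which the group elements form an orthonormal basis, and then for each $H\leq G$ the fixed submodule $\Z[G]^H$ has the orbit sums $\sum_{g\in gH}g$ (equivalently, $|H|$ times a natural basis of $\Z[H\backslash G]$, or one uses $\frac{1}{|H|}\sum_{h\in H}h\cdot(-)$) as a nice basis; a direct calculation gives $\det\bigl(\frac{1}{|H|}\langle\cdot,\cdot\rangle\bigm|\Z[G]^H\bigr)$ equal to a power of $|H|$ depending only on $[G:H]$, and then $\prod_H(\cdots)^{n_H}=1$ because $\Theta$ lies in $K(G)$, i.e.\ $\sum_H n_H\,\Q[G/H]=0$ forces the relevant exponents to cancel. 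Hence $\cC_\Theta(\Z[G])=1$ for all $\Theta\in K(G)$, and by multiplicativity $\cC_\Theta(\Z[G]^{\oplus[k:\Q]})=1$ as well. (Alternatively, one can simply cite the known fact that $\cC_\Theta$ vanishes on permutation modules; $\Z[G]$ is $\Z[G/1]$.)

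It therefore remains to show $\cC_\Theta(\cO_K)=1$ for all Brauer relations $\Theta$. Here I would exploit that $\cO_K$ is ``almost'' a permutation module: by the normal basis theorem for tame extensions, or more elementarily, for each intermediate field $K^H$ one has $\cO_K^H=\cO_{K^H}$, and one can choose a common $\Q$-basis witnessing $\cO_K\otimes\Q\cong\Q[G]^{[k:\Q]}$ together with $\Z$-lattices so that the relevant index $[\,(\Z[G]^{[k:\Q]})^H : \cO_K^H\,]$, measured via a fixed embedding $i$, equals (up to the local-at-each-prime ramification data) a product of local different/discriminant contributions $\mathfrak{d}_{K^H/k}$. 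Plugging into Lemma \ref{lem}, $\cC_\Theta(\cO_K)/\cC_\Theta(\Z[G]^{[k:\Q]})$ becomes $\prod_H [\,\cdots\,]^{2n_H}$, a product over $H$ of (norms of) relative discriminants raised to $n_H$; the conductor--discriminant formula expresses $\mathfrak{d}_{K^H/k}$ in terms of Artin conductors of the irreducible constituents of $\Q[G/H]$, so this product is a product of Artin conductors raised to powers $\sum_H n_H\langle\chi,\Q[G/H]\rangle=0$. Hence the whole product is trivial, giving $\cC_\Theta(\cO_K)=1$.

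The main obstacle will be making the index computation for $\cO_K^H$ inside the chosen copy of $\Z[G]^{[k:\Q]}$ precise and globally consistent: one must pick the embedding $i$ (equivalently the two lattices) coherently across all subgroups $H$ so that the indices genuinely assemble into relative discriminants, and one must be careful that archimedean places and the factor $[k:\Q]$ are handled correctly. A cleaner route around this, which I would prefer if the bookkeeping gets heavy, is to avoid choosing a single $i$ and instead invoke Proposition \ref{prop:factorisability} directly: show that $H\mapsto\bigl[(\Z[G]^{[k:\Q]})^H:i(\cO_K^H)\bigr]$, as a function on $B(G)$, kills $K(G)$, which by the conductor--discriminant formula is exactly the statement that $\Theta\mapsto\prod_H\mathfrak{d}_{K^H/k}^{n_H}$ is trivial on Brauer relations — and that is a formal consequence of the conductor--discriminant formula plus the definition of $K(G)$. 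Either way, the arithmetic input is the conductor--discriminant formula; everything else is formal manipulation via the results of \S\ref{sec:conn}.
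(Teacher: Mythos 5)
Your overall reduction is the same as the paper's: use Corollary \ref{cor} to reduce to $\cC_\Theta(\cO_K)=\cC_\Theta(\Z[G]^{\oplus[k:\Q]})$, observe $\cC_\Theta(\Z[G])=1$ since $\Z[G]$ is a permutation module, and conclude $\cC_\Theta(\cO_K)=1$ from the conductor--discriminant formula. You correctly identify the conductor--discriminant formula as the only arithmetic input.

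However, your treatment of the crux step, proving $\cC_\Theta(\cO_K)=1$, has a gap that you yourself flag. You try to compute indices $[(\Z[G]^{[k:\Q]})^H : i(\cO_K^H)]$ with respect to a chosen embedding $i$ and identify them with relative discriminants. This requires choosing $i$ coherently across all $H$, handling Archimedean contributions and the factor $[k:\Q]$, and proving a nontrivial index formula; this is exactly the classical bookkeeping (resolvents, associated orders) that the regulator-constant framework is meant to bypass, and you admit you have not carried it out. Your fallback route via Proposition \ref{prop:factorisability} still requires understanding the same index function, so it has the same gap.

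The key point you miss is that one never needs to choose $i$ or compute any index at all. Since $\cC_\Theta$ is independent of the pairing, one simply picks the pairing $\langle a,b\rangle = \sum_{\sigma: K\hookrightarrow\C}\sigma(a)\sigma(b)$ directly on $\cO_K$. For $a,b\in\cO_K^H=\cO_{K^H}$ this restricts to $|H|$ times the trace form on $\cO_{K^H}$, so $\det\bigl(\tfrac{1}{|H|}\langle\cdot,\cdot\rangle\bigm|\cO_K^H\bigr)=\Delta_{K^H}$, the absolute discriminant. Hence $\cC_\Theta(\cO_K)=\prod_H\Delta_{K^H}^{n_H}$, which equals $1$ by the conductor--discriminant formula. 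Lemma \ref{lem} is never invoked for this comparison, and the bookkeeping difficulty you foresee simply does not arise.
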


We will now give a very short proof of this result in terms of regulator
constants. First, note that by Corollary \ref{cor} the statement is equivalent
to the claim that for any $\Theta\in K(G)$, $\cC_\Theta(\cO_K)
= \cC_\Theta(\Z[G]^{\oplus[k:\Q]})$. Since regulator constants are multiplicative in
direct sums of modules (\cite[Corollary 2.18]{tamroot}), and since
$\cC_\Theta(\Z[G])=1$ for all $\Theta\in K(G)$ (\cite[Example 2.19]{tamroot}),
we have reduced the proof of the theorem to showing that $\cC_\Theta(\cO_K)=1$
for all $\Theta\in K(G)$.

If we choose the pairing on $\cO_K$ defined by
\[
\langle a,b\rangle = \sum_\sigma \sigma(a)\sigma(b)
\]
with the sum running over all embeddings $\sigma: K\hookrightarrow \C$, then
the determinants on $\cO_K^H$, $H\leq G$, appearing in the definition of regulator
constants are nothing but the absolute discriminants $\Delta_{K^H}$. The fact that
these vanish in Brauer relations follows immediately from the
conductor-discriminant formula.

\subsection{Multiplicative Galois module structure}
As we have mentioned above, it is natural to compare $\cO_{K,S}^\times$ with
$I_{K,S}$, since they span isomorphic $\Q[G]$-modules. For $H\leq G$, let
$S(K^H)$ denote the set of places of $K^H$ below those in $S$, and let $h_S(K^H)$
denote the $S$-class number of $K^H$.

\begin{theorem}[\cite{deSmit}, Theorem 5.2, see also \cite{Frohl}, Theorem 7 (Multiplicative)]\label{thm:mult}
Fix an embedding $i: I_{K,S}\hookrightarrow \cO_{K,S}^\times$ of $G$-modules
with finite cokernel. For
$\fp\in S(K^H)$, let $f_\fp$ be its residue field degree in $K/K^H$, define
\[
n(H)=\prod_{\fp\in S(K^H)}f_\fp,\;\;\;l(H)=\lcm\{f_\fp \;|\;\fp\in S(K^H)\}. 
\]
Then the function
\[
H\mapsto[\cO_{K^H,S}^\times : i(I_{K,S})^H]\frac{n(H)}{h_S(K^H)l(H)} 
\]
is factorisable.
\end{theorem}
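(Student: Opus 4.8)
The plan is to use Corollary~\ref{cor} to convert the assertion into an identity between regulator constants, evaluate the two regulator constants that occur, and use the analytic class number formula to finish.

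By Proposition~\ref{prop:factorisability}(4), the function $F\colon H\mapsto[\cO_{K^H,S}^\times:i(I_{K,S})^H]\cdot\frac{n(H)}{h_S(K^H)l(H)}$ is factorisable if and only if $\prod_H F(H)^{n_H}=1$ for every Brauer relation $\Theta=\sum_H n_H H\in K(G)$; as $F(H)>0$, this is equivalent to the same statement with each exponent doubled. Applying Lemma~\ref{lem} to $M=I_{K,S}$, $N=\cO_{K,S}^\times$ and the given embedding $i$ --- noting that $\ker i=0$, that $I_{K,S}$ is torsion free, and that $(\cO_{K,S}^\times)^H/\tors=\cO_{K^H,S}^\times/\mu_{K^H}$ --- gives
\[
\cC_\Theta(I_{K,S})=\prod_H\left(\frac{[\cO_{K^H,S}^\times:i(I_{K,S})^H]}{|\mu_{K^H}|}\right)^{2n_H}\cC_\Theta(\cO_{K,S}^\times),
\]
so the theorem is equivalent to the regulator constant identity
\[
\cC_\Theta(\cO_{K,S}^\times)=\cC_\Theta(I_{K,S})\cdot\prod_H\left(\frac{|\mu_{K^H}|\,n(H)}{h_S(K^H)\,l(H)}\right)^{2n_H}
\qquad(\ast)
\]
for all $\Theta\in K(G)$, which I would then prove by computing both sides.

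For $\cC_\Theta(\cO_{K,S}^\times)$ I would use the $G$-invariant pairing $\langle u,v\rangle=\sum_{w\in S}\log\|u\|_w\log\|v\|_w$, with $\|\cdot\|_w$ the normalised absolute value; it is non-degenerate on $\cO_{K,S}^\times/\mu_K$, and on $\cO_{K^H,S}^\times$ it equals $|H|\sum_{\fp\in S(K^H)}d_\fp\log\|u\|_\fp\log\|v\|_\fp$, where $d_\fp=e_\fp f_\fp$ is the local degree of $K/K^H$ at $\fp$. A Cauchy--Binet computation --- every maximal minor of the matrix $(\log\|u_i\|_\fp)_{i,\fp}$ being $\pm R_{K^H,S}$ by the product formula --- then gives $\cC_\Theta(\cO_{K,S}^\times)=\prod_H\bigl(R_{K^H,S}^2\,(\prod_\fp d_\fp)(\sum_\fp d_\fp^{-1})\bigr)^{n_H}$, where $R_{K^H,S}$ is the $S$-regulator. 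Similarly one reads off $\cC_\Theta(I_{K,S})$ directly from the lattice $\ker(\Z[S]\to\Z)$: for each $H$ one has $(I_{K,S})^H=\ker(\Z[S]^H\to\Z)$, the module $\Z[S]^H$ is free on the $H$-orbits in $S$ (equivalently on $S(K^H)$) with orbit-sum basis of Gram matrix $\mathrm{diag}(|H|/d_\fp)$, and a short covolume computation gives $\cC_\Theta(I_{K,S})=\prod_H\bigl(|S|\,(\lcm_\fp d_\fp)^2/(|H|\prod_\fp d_\fp)\bigr)^{n_H}$, $|S|$ being the number of places in $S$. Dividing, using $\sum_{\fp\in S(K^H)}d_\fp^{-1}=|S|/|H|$ and $\sum_H n_H=0$ (so that the factors $|S|$ cancel), one obtains the clean formula
\[
\frac{\cC_\Theta(\cO_{K,S}^\times)}{\cC_\Theta(I_{K,S})}=\prod_H\left(\frac{R_{K^H,S}\prod_{\fp\in S(K^H)}d_\fp}{\lcm_{\fp\in S(K^H)}d_\fp}\right)^{2n_H}.
\]

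It then remains to remove the regulators and identify the rest. By the Artin formalism the $S$-truncated Dedekind zeta functions satisfy $\prod_H\zeta_{K^H,S}(s)^{n_H}\equiv1$ for $\Theta\in K(G)$ (the corresponding virtual permutation representation of $G$ being $0$); comparing leading terms at $s=0$ --- the order of vanishing $\sum_H n_H(|S(K^H)|-1)$ is $0$ since $\sum_H n_H=0$ and $\sum_H n_H|S(K^H)|=\langle\Q[S],\sum_H n_H\Q[G/H]\rangle=0$ --- and using the $S$-unit class number formula $\zeta_{K^H,S}^{\ast}(0)=-h_S(K^H)R_{K^H,S}/|\mu_{K^H}|$ yields $\prod_H\bigl(h_S(K^H)R_{K^H,S}/|\mu_{K^H}|\bigr)^{n_H}=1$, hence $\prod_H R_{K^H,S}^{2n_H}=\prod_H(|\mu_{K^H}|/h_S(K^H))^{2n_H}$. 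Substituting this into the displayed ratio and comparing with $(\ast)$, the whole statement comes down to the purely local identity
\[
\prod_H\left(\frac{\prod_{\fp\in S(K^H)}e_\fp f_\fp}{\lcm_{\fp\in S(K^H)}(e_\fp f_\fp)}\right)^{n_H}=\pm\prod_H\left(\frac{\prod_{\fp\in S(K^H)}f_\fp}{\lcm_{\fp\in S(K^H)}f_\fp}\right)^{n_H},
\]
equivalently $\prod_H\bigl((\prod_\fp e_\fp)\,\lcm_\fp f_\fp/\lcm_\fp(e_\fp f_\fp)\bigr)^{n_H}=\pm1$, where $e_\fp$ is the ramification index of $\fp$ in $K/K^H$. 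This is trivial when no finite place of $S$ ramifies in $K/k$ --- in particular when $S=S_\infty$, which recovers Fr\"ohlich's theorem on units --- and in general I expect it to follow from a local analysis of how inertia groups, and hence ramification indices, vary through a Brauer relation. Disentangling these $\lcm$'s and ramification contributions over an arbitrary Brauer relation is, I think, the one genuinely non-mechanical step; the reductions in the earlier steps are routine once one trusts the two regulator constant evaluations.
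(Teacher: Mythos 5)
Your reduction is exactly the paper's: Lemma~\ref{lem} plus Proposition~\ref{prop:factorisability} turn the theorem into the regulator constant identity $(\ast)$, and your $(\ast)$ agrees verbatim with the displayed equation the paper arrives at between Theorems~\ref{thm:mult} and~\ref{thm:propmult}. From there, however, the two routes diverge. The paper does not attempt a from-scratch proof: it shows that $(\ast)$ is \emph{equivalent} to Theorem~\ref{thm:propmult}, which is imported as a known result from \cite{classnorels}, and the bridge between the two is the identification of $\cC_\Theta(I_{K,S})$ with $\cC_\Theta(\triv)/\prod_\fp \cC_\Theta(\Z[G/D_\fp])\cdot\prod_H(l(H)/n(H))^{2n_H}$, obtained via de~Smit's embedding $\fp\mapsto\sum_{\fq\mid\fp}f_\fp\fq$ and his index formula $[(I_{K,S})^H:I_{K^H,S}]=n(H)/l(H)$. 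You instead compute both $\cC_\Theta(\cO_{K,S}^\times)$ and $\cC_\Theta(I_{K,S})$ directly from explicit pairings (the log-height pairing and the orbit-sum lattice in $\Z[S]$), and then invoke Artin formalism for the $S$-truncated zeta functions. Your two Cauchy--Binet covolume computations and the class number formula step are all correct; in fact, combined they re-derive the substance of \cite[Prop.~2.15]{classnorels}, so your proof is more self-contained than the paper's.

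The gap you flag at the end is genuine, and it is not a small one. Because you computed $\cC_\Theta(I_{K,S})$ with the local degrees $d_\fp=e_\fp f_\fp$ (which is what the orbit-sum basis naturally produces), whereas de~Smit's statement is phrased purely in terms of residue degrees $f_\fp$, the discrepancy
\[
\prod_H\left(\frac{\bigl(\prod_{\fp\in S(K^H)}e_\fp\bigr)\,\lcm_{\fp\in S(K^H)}f_\fp}{\lcm_{\fp\in S(K^H)}(e_\fp f_\fp)}\right)^{n_H}=1
\]
over Brauer relations is precisely what de~Smit's index computation (and the paper's choice of the $f$-weighted embedding in place of your orbit-sum one) is designed to absorb; it is not an incidental leftover. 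It is true, but disentangling the $\lcm$'s of $e_\fp f_\fp$ versus $f_\fp$ against a general Brauer relation is exactly the content you would need to reproduce from \cite{deSmit}, not something that falls out of ``a local analysis'' for free. Until that identity is established, your argument proves the theorem only in the unramified-in-$S$ case (e.g.\ $S=S_\infty$), as you note. I would also point out that the paper itself does not fill this in: it explicitly defers to a combination of the computations in \cite{deSmit} and \cite{classnorels}, so the honest comparison is that you have rederived more of the surrounding material than the paper does, but the one step the paper outsources is the same one you cannot close.
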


As in the additive case, we want to understand and to reprove this theorem
in terms of regulator constants. More specifically, we will show it to be
equivalent to

\begin{theorem}[\cite{classnorels}, Proposition 2.15 and equation (1)]\label{thm:propmult}
For $\fp\in S(k)$,
let $D_{\fp}$ be the decomposition group of a prime $\fP\in S$ above $\fp$
(well-defined up to conjugacy). For any Brauer relation
$\Theta = \sum_H n_H H\in K(G)$, we have
\[
\cC_\Theta(\cO_{K,S}^\times) = \frac{\cC_\Theta(\triv)}{\prod_{\fp\in S(k)}\cC_\Theta(\Z[G/D_\fp])}
\prod_H \left(\frac{w(K^H)}{h_S(K^H)}\right)^{2n_H},
\]
where $w(K^H)$ denotes the number of roots of unity in $K^H$, i.e. the size
of the torsion subgroup of $\cO_{K^H,S}^\times$.
\end{theorem}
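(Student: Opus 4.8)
The plan is to compute $\cC_\Theta(\cO_{K,S}^\times)$ directly from the definition, using a judicious pairing, and to recognise the result as a product of $S$-regulators $R_S(K^H)$ times an elementary combinatorial factor. The $S$-class number formula together with Artin's formalism will then convert the regulators into the asserted product of the $w(K^H)/h_S(K^H)$, while the combinatorial factor will turn out to be exactly $\cC_\Theta(\triv)/\prod_\fp\cC_\Theta(\Z[G/D_\fp])$. In spirit this mirrors the short proof of $\cC_\Theta(\cO_K)=1$ via the conductor--discriminant formula given in \S\ref{sec:results}.

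Concretely, I would equip $\cO_{K,S}^\times$ with the $G$-invariant pairing $\langle u,v\rangle=\sum_{w\in S}\log\|u\|_w\log\|v\|_w$, the sum over the places $w$ of $K$ in $S$ and $\|\cdot\|_w$ the normalised absolute value, for which the product formula reads $\prod_w\|u\|_w=1$; it is non-degenerate on $\cO_{K,S}^\times/\tors$ by Dirichlet's $S$-unit theorem. Since $(\cO_{K,S}^\times)^H=\cO_{K^H,S}^\times$, and since for $u\in K^H$ the value $\|u\|_W$ at a place $W$ of $K$ above $\fq\in S(K^H)$ is the $[K_W:K^H_\fq]$-th power of $\|u\|_\fq$, the restriction of $\langle\cdot,\cdot\rangle$ to $\cO_{K^H,S}^\times$ equals $(u,v)\mapsto\sum_{\fq\in S(K^H)}\tfrac{|H|^2}{d_\fq}\log\|u\|_\fq\log\|v\|_\fq$, where $d_\fq$ is the number of places of $K$ above $\fq$, so that $d_\fq[K_W:K^H_\fq]=|H|$. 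A Cauchy--Binet expansion, in which the product formula $\sum_\fq\log\|\varepsilon\|_\fq=0$ forces every maximal minor of the logarithm matrix of a fundamental system of $S$-units of $K^H$ to have absolute value $R_S(K^H)$, then yields
\[
\det\Bigl(\tfrac{1}{|H|}\langle\cdot,\cdot\rangle\,\big|\,\cO_{K^H,S}^\times/\tors\Bigr)
=\frac{|S|\cdot|H|^{\,|S(K^H)|-1}}{\prod_{\fq\in S(K^H)}d_\fq}\,R_S(K^H)^2 .
\]

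With the permutation pairing, on the other hand (which is permitted, $\cC_\Theta$ being independent of the pairing), $\det(\tfrac1{|H|}\langle\cdot,\cdot\rangle\mid\triv)=|H|^{-1}$ and $\det(\tfrac1{|H|}\langle\cdot,\cdot\rangle\mid\Z[G/D_\fp]^H)=\prod_{\fq\mid\fp}(d_\fq/|H|)$, the product over the places $\fq$ of $K^H$ above $\fp$; hence $\cC_\Theta(\triv)=\prod_H|H|^{-n_H}$ and, by multiplicativity of $\cC_\Theta$ in direct sums (\cite[Corollary 2.18]{tamroot}), $\prod_{\fp\in S(k)}\cC_\Theta(\Z[G/D_\fp])=\cC_\Theta(\Z[S])=\prod_H\bigl(|H|^{-|S(K^H)|}\prod_{\fq\in S(K^H)}d_\fq\bigr)^{n_H}$. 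Raising the displayed determinant to the power $n_H$ and multiplying over $H$, everything cancels against these two expressions except a factor $|S|^{\sum_Hn_H}$, which is $1$ since $\sum_Hn_H=0$ for every Brauer relation (it is the multiplicity of the trivial character in $\bigoplus_H\C[G/H]^{n_H}=0$). One is left with
\[
\cC_\Theta(\cO_{K,S}^\times)=\frac{\cC_\Theta(\triv)}{\prod_{\fp\in S(k)}\cC_\Theta(\Z[G/D_\fp])}\prod_H R_S(K^H)^{2n_H} .
\]

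It remains to identify $\prod_H R_S(K^H)^{2n_H}$ with $\prod_H\bigl(w(K^H)/h_S(K^H)\bigr)^{2n_H}$, and this is where Artin's formalism enters, in the role played in the additive case by the conductor--discriminant formula. For a Brauer relation $\Theta$ one has $\prod_H\zeta_{K^H,S}(s)^{n_H}\equiv 1$: by inductivity of Artin $L$-functions $\prod_H\zeta_{K^H}(s)^{n_H}=L\bigl(s,\bigoplus_H\C[G/H]^{n_H}\bigr)=1$ because $\bigoplus_H\C[G/H]^{n_H}=0$ by definition of $K(G)$, and the finitely many finite Euler factors removed in passing to the $S$-truncated zeta functions contribute, prime by prime, $\det\bigl(1-\mathrm{Frob}_\fp N\fp^{-s}\mid(\bigoplus_H\C[G/H]^{n_H})^{I_\fp}\bigr)=1$ for the same reason. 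Comparing the behaviour at $s=0$, where $\zeta_{K^H,S}(s)$ has a zero of order $|S(K^H)|-1$ with leading coefficient $-h_S(K^H)R_S(K^H)/w(K^H)$ by the $S$-class number formula, and using $\sum_Hn_H=0$ once more to dispose of the signs, gives $\prod_H\bigl(h_S(K^H)R_S(K^H)/w(K^H)\bigr)^{n_H}=1$; taking reciprocals and squaring and substituting into the previous display proves the theorem. The two points requiring care are the choice of pairing in the determinant computation -- it must be the one summed over the places of $K$ with the product-formula normalisation, for it is precisely this that makes the powers of $|H|$ and the numbers $d_\fq$ cancel against $\cC_\Theta(\triv)$ and the $\cC_\Theta(\Z[G/D_\fp])$ -- and the fact that in the final step Artin's theorem has to be applied not merely to the Dedekind zeta functions but also to their individual Euler factors.
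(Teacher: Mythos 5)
Your proof is correct, and it takes a genuinely different route from the one in the paper. The paper does not prove Theorem~\ref{thm:propmult} from scratch: it treats it as a known result from \cite{classnorels} and devotes the section to showing that it is \emph{equivalent}, via Lemma~\ref{lem} and Proposition~\ref{prop:factorisability}, to de~Smit's Theorem~\ref{thm:mult}; the content of that equivalence is reduced to the linear-algebra identity $\cC_\Theta(I_{K,S})=\frac{\cC_\Theta(\triv)}{\prod_\fp\cC_\Theta(\Z[G/D_\fp])}\prod_H(l(H)/n(H))^{2n_H}$, proved using the embedding (\ref{eq:embedding}) and de~Smit's index formula $[(I_{K,S})^H:I_{K^H,S}]=n(H)/l(H)$. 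You instead prove the theorem directly: you compute $\cC_\Theta(\cO_{K,S}^\times)$ by a Cauchy--Binet expansion of the logarithmic pairing to obtain $S$-regulators, compute $\cC_\Theta(\triv)$ and $\prod_\fp\cC_\Theta(\Z[G/D_\fp])$ with the permutation pairing, and then convert $\prod_H R_S(K^H)^{2n_H}$ into $\prod_H(w(K^H)/h_S(K^H))^{2n_H}$ via the $S$-class number formula and the vanishing of $\prod_H\zeta_{K^H,S}(s)^{n_H}$ (which you correctly justify Euler factor by Euler factor, and for which you also need, and implicitly have, $\sum_H n_H(|S(K^H)|-1)=0$ so that the orders of vanishing cancel). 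All the individual computations check out: the restriction of the pairing to $(\cO_{K,S}^\times)^H$, the identity $\sum_{\fq\in S(K^H)}d_\fq=|S|$, the equality up to sign of the maximal minors of the log-matrix, the Gram determinant $\prod_{\fq\mid\fp}(d_\fq/|H|)$ on $\Z[G/D_\fp]^H$ coming from $H$-orbit sums, and the disposal of the sign and of the $|S|$ factor via $\sum_Hn_H=0$. What your approach buys is a self-contained analytic proof (essentially reconstructing the argument of \cite{classnorels}) that does not invoke de~Smit's theorem or his combinatorial $n(H)/l(H)$ index formula at all; what the paper's route buys is precisely the thing you lose, namely the explicit dictionary between the two theorems, which is the stated purpose of that section.
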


Note that since $I_{K,S}$ is torsion free and $I_{K,S}\hookrightarrow \cO_{K,S}^\times$
is injective,
Proposition \ref{prop:factorisability} and Lemma \ref{lem} imply that Theorem
\ref{thm:mult} is equivalent to the
following statement: for any Brauer relation $\Theta=\sum_H n_H H$,
\[
\cC_\Theta(\cO_{K,S}^\times) = \cC_{\Theta}(I_{K,S})\prod_{H}
\left(\frac{w(K^H)n(H)}{h_S(K^H)l(H)}\right)^{2n_H}.
\]
The equivalence of Theorems \ref{thm:mult} and \ref{thm:propmult} will therefore
be established if we show that
\[
\cC_{\Theta}(I_{K,S}) = \frac{\cC_\Theta(\triv)}{\prod_{\fp\in S(k)}\cC_\Theta(\Z[G/D_\fp])}
\prod_H\left(\frac{l(H)}{n(H)}\right)^{2n_H}.
\]
This is just a linear algebra computation that we will not carry out in full detail,
since it is a combination of the computations of \cite{deSmit} and \cite{classnorels}.
Indeed, it is shown in \cite{deSmit} that under the embedding

\begin{eqnarray}\label{eq:embedding}
\Z[S(K^H)]\hookrightarrow \Z[S], \;\;\;\fp\mapsto \sum_{\fq\in S,\fq | \fp}f_\fp\fq
\end{eqnarray}

we have $[(I_{K,S})^H : I_{K^H,S}] = \frac{n(H)}{l(H)}$. So, instead of computing
\[
\cC_\Theta(I_{K,S}) = \prod_H \det\left(\frac{1}{|H|}\langle\cdot,\cdot\rangle \big| (I_{K,S})^H\right)^{n_H}
\]
for a suitable choice of pairing $\langle\cdot,\cdot\rangle$ on $I_{K,S}$, we may
compute
\begin{eqnarray}\label{eq:regconst}
\prod_H \det\left(\frac{1}{|H|}\langle\cdot,\cdot\rangle \big| I_{K^H,S}\right)^{n_H},
\end{eqnarray}
where $I_{K^H,S}$ is identified with a submodule of $I_{K,S}$ as in (\ref{eq:embedding}).
To do that, we note that for any $H\leq G$, $I_{K^H,S}$ is generated by
$\fp_1 - \fp_i$, $\fp_i\in S(K^H)\backslash\{\fp_1\}$
for any fixed $\fp_1\in S(K^H)$, and that there is a natural $G$-invariant
non-degenerate pairing on $I_{K,S}$
that makes the canonical basis of $\Z[S]$ orthonormal. It is now a straightforward
computation, which has essentially been carried out in \cite{classnorels},
to show that the quantity (\ref{eq:regconst}) is equal to
\[
\frac{\cC_\Theta(\triv)}{\prod_{\fp\in S(k)}\cC_\Theta(\Z[G/D_\fp])},
\]
as required.

\section{$K$-groups of rings of integers}\label{sec:K}
As another illustration of the connection we have established, we will give
an easy proof of an analogue of \cite[Theorem 5.2]{deSmit} for higher
$K$-groups of rings of integers. The main ingredient will be the compatibility
of Lichtenbaum's conjecture on leading coefficients of Dedekind zeta functions
at negative integers with Artin formalism, as proved in \cite{Bur-10}.

Let $n\geq 2$ be an integer. Let $S_1(F)$, respectively $S_2(F)$ denote the
set of real embeddings, respectively of representatives from each pair of
complex conjugate embeddings of a number field $F$,
and denote their cardinalities by $r_1(F)$, respectively $r_2(F)$. Denote
$S_2(F)\cup S_2(F)$ by $S_{\infty}(F)$.
It is shown in \cite{Borel} that the ranks of the higher $K$-groups or rings of
integers are as follows:
\[
\rk(K_{2n-1}(\cO_F)) = \leftchoice{r_1(F)+r_2(F)}{n\text{ odd}}{r_2(F)}{n\text{ even}.}
\]
Let $K/k$ be a finite Galois extension with Galois group
$G$, and let $S_r(K/k)$ denote the set of real places of $k$ that become complex
in $K$. For $\fp\in S_r(K/k)$, let $\epsilon_{\fp}$ denote the non-trivial
one-dimensional $\Q$-representation of the decomposition group $D_{\fp}$, which
has order 2.

By Artin's induction theorem, a rational representation of a finite group is
determined by the dimensions of the fixed subrepresentations under all subgroups of $G$. It
therefore follows that we have the following isomorphisms of Galois modules:
\begin{eqnarray}
K_{2n-1}(\cO_K)\otimes \Q & \cong &
\Q[S_\infty(K)]\nonumber\\
& \cong & \bigoplus_{\fp\in S_\infty(k)}\Q[G/D_\fp]\;\;\;\text{if $n$ is odd, and}\label{eq:KQodd}\\
K_{2n-1}(\cO_K)\otimes \Q & \cong &
\bigoplus_{\fp \in S_r(K/k)}\Ind_{G/D_\fp}\epsilon_{\fp}\oplus \bigoplus_{\fp\in S_2(k)}\Q[G]\nonumber\\
& \cong & \bigoplus_{\fp \in S_r(K/k)}\Q[G]\big/\Q[G/D_\fp] \oplus \bigoplus_{\fp\in S_2(k)}\Q[G]\;\;\;\text{if $n$ is  even.}\label{eq:KQeven}
\end{eqnarray}

We are thus led to compare, using the machine of factorisability,
the Galois module structure of $K_{2n-1}(\cO_K)$ with
$\Z[S_\infty(K)]$ when $n$ is odd, and with
\[
\bigoplus_{\fp \in S_r(K/k)}\Ind_{G/D_\fp}\left(\epsilon_\fp\right) \oplus \bigoplus_{\fp\in S_2(k)}\Z[G]
%=\bigoplus_{\fp \in S_r(K/k)} \Z[G]\big/\Z[G/D_\fp]\oplus \bigoplus_{\fp\in S_2(k)}\Z[G]
\]
when $n$ is even. Here and elsewhere, we write $\epsilon_\fp$ interchangeably
for the rational representation and for the unique (up to isomorphism) $\Z$-free
$\Z[D_\fp]$-module inside it.

\begin{theorem}
Let $K/k$ be a finite Galois extension of number fields with Galois group $G$,
let $n\geq 2$ be an integer. Then the function
\[
H\mapsto \frac{[K_{2n-1}(\cO_{K})^H:i(M)^H]}{|K_{2n-2}(\cO_{K^H})|}
\]
is factorisable at all odd primes, where
\begin{eqnarray*}
M & = & \Z[S_\infty(K)]\cong \bigoplus_{\fp\in S_\infty(k)}\Z[G/D_\fp]\;\;\;\;\text{if $n$ is odd, and}\\
M & = & \bigoplus_{\fp \in S_r(K/k)}\Ind_{G/D_\fp}\left(\epsilon_\fp\right) \oplus \bigoplus_{\fp\in S_2(k)}\Z[G] \;\;\;\text{if $n$ is even},
\end{eqnarray*}
and where $i: M \hookrightarrow K_{2n-1}(\cO_K)$
is any inclusion of $G$-modules.
\end{theorem}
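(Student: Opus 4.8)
The plan is to mimic the strategy used for the additive and multiplicative theorems above: reduce the factorisability assertion, via Proposition \ref{prop:factorisability} and Lemma \ref{lem}, to an identity between regulator constants, and then establish that identity using the known compatibility of Lichtenbaum's conjecture with Artin formalism. Concretely, since $M$ is $\Z$-free and, by (\ref{eq:KQodd}) or (\ref{eq:KQeven}), satisfies $M\otimes\Q\cong K_{2n-1}(\cO_K)\otimes\Q$, Lemma \ref{lem} tells us that the stated function is factorisable at all odd primes if and only if, for every Brauer relation $\Theta=\sum_H n_H H$, the odd part of
\[
\cC_\Theta(K_{2n-1}(\cO_K))\Big/\cC_\Theta(M)
\]
equals the odd part of $\prod_H\left(|K_{2n-2}(\cO_{K^H})|\cdot|K_{2n-1}(\cO_K)^H_{\tors}|^{-1}\right)^{2n_H}$. (Here $|K_{2n-1}(\cO_K)^H_{\tors}|$ contributes because $K_{2n-1}$ has torsion, unlike the $S$-unit case; one uses $K_{2n-1}(\cO_{K^H})\cong K_{2n-1}(\cO_K)^H$ up to controlled factors, or simply absorbs torsion into the torsion terms of Lemma \ref{lem}.) So the whole theorem comes down to computing both $\cC_\Theta(K_{2n-1}(\cO_K))$ and $\cC_\Theta(M)$.

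The first I would obtain from arithmetic: the Lichtenbaum conjecture (or the relevant part that is unconditionally available after tensoring with $\Z[\tfrac12]$, which is why we only claim factorisability at odd primes) expresses the leading coefficient of $\zeta_{K^H}(s)$ at $s=1-n$ in terms of $\#K_{2n-2}(\cO_{K^H})$, $\#K_{2n-1}(\cO_{K^H})_{\tors}$, and a regulator determinant on $K_{2n-1}(\cO_{K^H})$ built from the Borel regulator pairing; the compatibility of these leading coefficients with Artin formalism, as proved in \cite{Bur-10}, says precisely that the product over $H$ of these leading coefficients, weighted by $n_H$, is trivial (up to powers of $2$) for any Brauer relation $\Theta$. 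Feeding the Borel pairing into the definition of $\cC_\Theta$ and using that the archimedean $L$-factors and the power of $\pi$ also vanish in Brauer relations (conductor-discriminant style), one reads off
\[
\cC_\Theta(K_{2n-1}(\cO_K)) = \prod_H\left(\frac{\#K_{2n-2}(\cO_{K^H})}{\#K_{2n-1}(\cO_{K^H})_{\tors}}\right)^{2n_H}\times(\text{2-power}),
\]
up to the usual care about which normalisation of the regulator is used. Matching this against what Lemma \ref{lem} demands then forces us to show $\cC_\Theta(M)=1$ at odd primes for all $\Theta$. When $n$ is odd this is immediate: $M=\bigoplus_\fp\Z[G/D_\fp]$ is a permutation module, and regulator constants of permutation modules are trivial (\cite[Example 2.19]{tamroot}, together with multiplicativity). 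When $n$ is even one has the extra summands $\Ind_{G/D_\fp}(\epsilon_\fp)$; here I would compute $\cC_\Theta(\Ind_{G/D_\fp}\epsilon_\fp)$ directly --- it is a rational square (indeed, being induced from the rank-one module $\epsilon_\fp$ over a group of order $2$, its regulator constants are essentially discriminants of the at-most-quadratic subextensions, which vanish in Brauer relations by conductor-discriminant) --- and hence has trivial odd part, so again $\cC_\Theta(M)$ has trivial odd part.

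The main obstacle, as usual, is bookkeeping at the prime $2$ and the precise matching of normalisations: the Borel regulator differs from the Beilinson/Lichtenbaum regulator by an explicit power of $2$ and powers of $\pi$, the $2$-part of $K_{2n-2}$ is not controlled by the unconditional results, and the archimedean contributions to the functional equation must be shown to cancel in $\Theta$. I would handle this by working throughout modulo squares and modulo powers of $2$ --- which is exactly the content of the phrase ``factorisable at all odd primes'' --- so that all these ambiguous factors are invisible, and invoke \cite{Bur-10} only for the (rationality and) Artin-formalism statement up to such factors. A secondary technical point is justifying $K_{2n-1}(\cO_{K^H})\cong K_{2n-1}(\cO_K)^H$ and $K_{2n-2}(\cO_{K^H})$'s relation to the $H$-fixed points up to odd-order error; this follows from a transfer argument (the composite of restriction and corestriction is multiplication by $|H|$) applied to the finitely generated $K$-groups, which suffices after inverting $2$ since all the primes we care about are odd but $|H|$ may be even --- so in fact one should invert $|G|$ here, which is still harmless for the odd-prime-away-from-$|G|$ part and can be patched for odd primes dividing $|G|$ by the more careful descent results available in the literature. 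I expect the write-up to consist mostly of carefully citing \cite{Bur-10}, \cite{Borel}, and \cite{tamroot}, and assembling the identity above.
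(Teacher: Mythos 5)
Your overall strategy is the same as the paper's: reduce the factorisability claim to a regulator constant identity via Proposition~\ref{prop:factorisability} and Lemma~\ref{lem}, identify $\cC_\Theta(K_{2n-1}(\cO_K))$ with the arithmetic quantity using Burns's Artin-formalism compatibility \cite{Bur-10}, and then reduce to showing $\cC_\Theta(M)$ has trivial odd part. However, two of your steps do not go through as written.

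First, in the $n$ even case you argue that $\cC_\Theta(\Ind_{G/D_\fp}\epsilon_\fp)$ ``is a rational square \ldots and hence has trivial odd part.'' This is a non-sequitur: a rational square can certainly have nontrivial odd $p$-adic valuation (e.g.\ $9$). The conductor-discriminant heuristic is also not applicable to $\Ind_{G/D_\fp}\epsilon_\fp$, which is an abstract $\Z[G]$-module, not a ring of integers. What one actually needs, and what the paper invokes, is that regulator constants commute with induction and that cyclic groups (here $D_\fp$, of order $2$) have no nontrivial Brauer relations, so that $\cC_\Theta(\Ind_{G/D_\fp}\epsilon_\fp)=1$ exactly; this is \cite[Corollary 2.18 and Proposition 2.45~(2)]{tamroot}. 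Relatedly, your assertion that factorisability at odd primes is ``exactly the content'' of working modulo squares and powers of $2$ is a misconception --- it is only the powers of $2$ that are being discarded, not squares.

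Second, the identification $(K_{2n-1}(\cO_K)\otimes\Z_p)^H\cong K_{2n-1}(\cO_{K^H})\otimes\Z_p$ for \emph{all} odd $p$ is needed, and your transfer argument only yields it for $p\nmid|G|$. You acknowledge this and gesture at ``more careful descent results,'' but that is precisely the content needed: the paper cites the Quillen--Lichtenbaum conjecture (a theorem, via Bloch--Kato and Rost--Voevodsky--Weibel), which gives the statement at every odd prime. Without that citation, the argument has a genuine hole at odd primes dividing $|G|$.
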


\begin{proof}
  Proposition \ref{prop:factorisability} and Lemma \ref{lem} imply that the
  assertion of the theorem is equivalent to the claim that
  for any Brauer relation $\Theta=\sum_H n_H H$,
  \begin{eqnarray*}
    1 & =_{2'} & \prod_H \frac{[K_{2n-1}(\cO_{K})^H:i(M)^H]^{2n_H}}{|K_{2n-2}(\cO_{K^H})|^{2n_H}}\\
    & =_{2'} & \frac{\cC_{\Theta}(M)}{\cC_{\Theta}(K_{2n-1}(\cO))}\cdot
    \prod_H \left(\frac{|K_{2n-1}(\cO_{K})^H_{\tors}|}{|K_{2n-2}(\cO_{K^H})|}\right)^{2n_H},
  \end{eqnarray*}
  where $=_{2'}$ means that the two sides have the same $p$-adic valuation for
  all odd primes $p$.
  
  Now, for any odd prime $p$ and any subgroup $H\leq G$, we have
  \[
    (K_{2n-1}(\cO_K)\otimes \Z_p)^H \cong K_{2n-1}(\cO_{K^H})\otimes \Z_p.
  \]
  This is a consequence of the Quillen--Lichtenbaum conjecture
  (see e.g. \cite[Proposition 2.9 and the discussion preceding it]{Kol-02}),
  which is known
  to follow from the Bloch--Kato conjecture, which in turn is now a theorem of
  Rost, Voevodsky, and Weibel \cite{Rost, Voevodsky, Weibel}.
  Moreover, it follows from \cite{Bur-10} (see \cite[equation (2.6)]{SB}) that
  \[
  \prod_H\left(\frac{|K_{2n-1}(\cO_{K^H})_{\tors}|}{|K_{2n-2}(\cO_{K^H})|}\right)^{2n_H} =_{2'} \cC_\Theta(K_{2n-1}(\cO_K)).
  \]
  Putting this together, we see that the assertion of the theorem
  is equivalent to the claim that $\cC_\Theta(M)=_{2'}1$ for all Brauer relations
  $\Theta$.
  But $\cC_\Theta(M)=1$ (not just up to powers of 2) by
  \cite[Corollary 2.18 and Proposition 2.45 (2)]{tamroot}, and because
  cyclic groups have no non-trivial Brauer relations.
\end{proof}

\end{document}